\newtheorem{thm}{Theorem}[section]
\newtheorem{defi}[thm]{Definition}
\newtheorem{lma}[thm]{Lemma}
\newtheorem{rem}[thm]{Remark}
\numberwithin{equation}{section}
\newcommand{\R}{{\mathbb R}}
\newcommand{\de}{d}
\newcommand{\A}{\operatorname{area}}
\newcommand{\id}{\operatorname{id}}
\begin{document}

\title{Area preserving isotopies of self transverse immersions of $S^1$ in $\R^2$. }
\author{Cecilia Karlsson}
\address{Department of mathematics, 
Uppsala University, 
Box 480, 
751 06 Uppsala, 
Sweden}
\email{ceka{\@@}math.uu.se}

\begin{abstract}
Let $C$ and $C'$ be two smooth self transverse immersions of $S^1$
into $\R^2$. Both $C$ and $C'$ subdivide the plane into a number of disks and one
unbounded component. An isotopy of the plane which takes $C$ to $C'$ induces
a 1-1 correspondence between the disks of $C$ and $C'$. An obvious necessary
condition for there to exist an area-preserving isotopy of the plane
taking $C$ to $C'$ is that there exists an isotopy for which the area of every
disk of $C$ equals that of the corresponding disk of $C'$. In this paper
we show that this is also a sufficient condition.
\end{abstract}

\subjclass[2000]{57R17 (53C44, 53D42)}

\maketitle 

\section{Introduction}\label{sec:int}
Let $C$ be a smooth self transverse immersion of $S^1$ into the plane $\R^2$ (by Sard's theorem any immersion is self transverse after arbitrarily small perturbation).  Then $C$  subdivides the plane into a number of bounded connected components and one unbounded component. The bounded components are topological disks and we call them the  {\em disks of $C$}. Let $C'$ be another self transverse immersion of $S^1$ into $\R^{2}$ such that there exists an isotopy of the plane taking $C$ to $C'$. Then the isotopy induces a 1-1 correspondence between the disks of $C$ and the disks of $C'$. 

In this paper we study the existence of area-preserving isotopies of the plane taking $C$ to $C'$, where,  if $\de x\wedge \de y$ denotes the standard area form on $\R^{2}$, we say that an isotopy $\phi_\tau\colon\R^{2}\to\R^{2}$, $0\le \tau\le 1$ is {\em area-preserving} if 
\begin{equation*}
 \phi_\tau^{\ast}(\de x\wedge \de y)=\de x\wedge \de y
\end{equation*}
for every $\tau\in[0,1]$. Since $\phi_\tau$ area-preserving implies that  $\A(\phi_\tau(U))=\A(U)$ for any measurable $U \subset \R^2$, an obvious necessary condition for the existence of an area-preserving isotopy $\phi_\tau$ taking $C$ to $C'$ is that the area of any disk  $D$  of $C$ satisfies
\begin{equation}\label{e:diskequal}
\A(D)=\A(D')
\end{equation}
where $D'$ is the disk of $C'$ which corresponds to $D$ under $\phi_\tau$. We call an isotopy which satisfies \eqref{e:diskequal} \emph{disk-area-preserving}. The main result of the paper shows that this is also a sufficient condition. More precisely, we have the following result.  
\begin{thm}\label{thm1}
Let $C$ and $C'$ be two self transverse immersions of $S^1$ into $\R^2$ and assume that there is a disk-area-preserving isotopy $\psi_\tau$, $0\le\tau\le 1$, of $\R^{2}$ taking $C$ to $C'$ (i.e., $\psi_0=\id$, $\psi_1(C)=C'$, and for every disk $D$ of $C$, $\A(\psi_1(D))=\A(D)$). Then there exists an area-preserving isotopy $\phi_\tau$, $0\le \tau\le 1$, of $\R^{2}$ with $\phi_0=\id$ and  $\phi_1(C)=C'$.
\end{thm}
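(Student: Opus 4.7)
The plan is to use Moser's trick to build an area-preserving diffeomorphism $\Phi\colon\R^{2}\to\R^{2}$ with $\Phi(C)=C'$, and then to connect $\Phi$ to the identity through area-preserving maps using the path-connectedness of the group of compactly supported area-preserving diffeomorphisms of $\R^{2}$; this will produce the desired isotopy $\phi_\tau$. Assuming (after a cutoff) that $\psi_{1}$ is the identity outside a compact set, put $\omega_{0}=\de x\wedge \de y$ and $\omega_{1}=\psi_{1}^{\ast}\omega_{0}$. The disk-area hypothesis gives $\int_{D}(\omega_{1}-\omega_{0})=0$ for every disk $D$ of $C$, and $\omega_{t}=(1-t)\omega_{0}+t\omega_{1}$ is a path of area forms since $\psi_{1}$ preserves orientation. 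Moser's trick applied to this path, with a primitive $\alpha$ of $\omega_{1}-\omega_{0}$, produces the flow $\rho_{t}$ of the vector field $X_{t}$ defined by $i_{X_{t}}\omega_{t}=-\alpha$; by construction $\rho_{1}^{\ast}\omega_{1}=\omega_{0}$, so $\Phi:=\psi_{1}\circ\rho_{1}$ is area preserving, and $\Phi(C)=C'$ holds provided $\rho_{1}(C)=C$. This last condition amounts to $X_{t}$ being tangent to $C$ at every point of $C$, equivalently to the vanishing of $\gamma^{\ast}\alpha$ on $S^{1}$, where $\gamma\colon S^{1}\to\R^{2}$ parametrises $C$; at a self-intersection the two transverse tangents span $T_{p}\R^{2}$, so this forces $\alpha_{p}=0$ and $X_{t}(p)=0$.

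The main obstacle is producing such an $\alpha$. Fix any primitive $\beta$ of $\omega_{1}-\omega_{0}$ (possible by contractibility of $\R^{2}$) and seek $h\colon\R^{2}\to\R$ with $\alpha=\beta-dh$, so that the vanishing condition becomes $d(h\circ\gamma)=\gamma^{\ast}\beta$ on $S^{1}$. Viewed as a $1$-cycle, $\gamma$ equals the boundary of the $2$-chain $\sum_{i}w_{i}D_{i}$, where $D_{i}$ ranges over the disks of $C$ and $w_{i}$ is the winding number of $\gamma$ around a point of $D_{i}$, so Stokes' theorem yields
\begin{equation*}
\int_{S^{1}}\gamma^{\ast}\beta=\int_{\gamma}\beta=\sum_{i}w_{i}\int_{D_{i}}(\omega_{1}-\omega_{0})=0.
\end{equation*}
Hence $\tilde h(s):=\int_{s_{0}}^{s}\gamma^{\ast}\beta$ is a well-defined smooth function on $S^{1}$. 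Whenever $\gamma(s_{1})=\gamma(s_{2})=p$ is a self-intersection, the difference $\tilde h(s_{1})-\tilde h(s_{2})$ equals $\int\beta$ over one of the two sub-loops of $\gamma$ based at $p$; that sub-loop is again a null-homologous $1$-cycle bounding an integer combination of the $D_{i}$, and the same disk-area argument forces the integral to vanish. Thus $\tilde h$ descends to a single-valued function $\hat h$ on $C$.

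It remains to extend $\hat h$ smoothly to $\R^{2}$. Away from self-intersections a tubular-neighbourhood extension (constant along normal directions) suffices; near a crossing $p$, in local coordinates $(x,y)$ straightening the two branches of $C$ to the coordinate axes, the explicit formula $h(x,y)=f_{1}(x)+f_{2}(y)-\hat h(p)$, with $f_{1},f_{2}$ the restrictions of $\hat h$ to the two branches, gives a local smooth extension, and a partition of unity glues these into a compactly supported $h\colon\R^{2}\to\R$. Then $\alpha=\beta-dh$ is a compactly supported primitive of $\omega_{1}-\omega_{0}$ with $\gamma^{\ast}\alpha=0$, Moser's trick delivers $\rho_{1}$ with $\rho_{1}(C)=C$, and $\Phi=\psi_{1}\circ\rho_{1}$ is the advertised area-preserving diffeomorphism. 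Finally, $\Phi$ lies in the compactly supported area-preserving diffeomorphism group of $\R^{2}$, which is path-connected; equivalently, $\Phi$ is the time-one map of a Hamiltonian isotopy, and any such path from $\id$ to $\Phi$ is the required $\phi_\tau$. The essential work is the construction of the primitive $\alpha$ with $\gamma^{\ast}\alpha=0$: the disk-area-preserving hypothesis enters precisely to kill the cohomological obstructions coming from $\gamma$ itself and from every self-intersection sub-loop.
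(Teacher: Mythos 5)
Your argument is correct in its essentials, and it takes a genuinely different and considerably shorter route than the paper. The paper never forms the single interpolation $\omega_t=(1-t)\,\de x\wedge \de y+t\,\psi_1^*(\de x\wedge \de y)$; instead it first upgrades the disk-area-preserving isotopy to a \emph{semi}-area-preserving one (disk areas constant for \emph{all} times) via a shrinking trick and a Moser argument with time-dependent forms concentrated near each disk (Lemmas \ref{moser2} and \ref{sapi}), and then subdivides time and proves a local statement (Lemma \ref{delta}): for $C'$ sufficiently $C^1$-close to $C$ in a regular neighborhood, explicit Hamiltonian flows near the double points followed by a Hamiltonian flow along the annulus (Lemma \ref{lma5}) give the area-preserving isotopy. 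The cohomological input is the same in both proofs --- in Lemma \ref{lma5} the disk-area condition is used to show that the integral of the height function over any sub-loop of $C$ based at a double point vanishes, exactly your computation of $\tilde h(s_1)-\tilde h(s_2)$ --- but you deploy it once, globally, to correct an arbitrary primitive $\beta$ of $\omega_1-\omega_0$ to one with $\gamma^*\alpha=0$, so that the Moser field is tangent to $C$, vanishes at the double points, and its flow fixes $C$ setwise. This buys you a proof with no $\delta$-closeness estimates and no compactness/subdivision step; what the paper's construction buys is explicit control (its intermediate curves track the given isotopy $\psi_\tau(C)$, whereas your path from $\id$ to $\Phi$ says nothing about the intermediate curves, which the theorem does not require). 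Three points in your write-up need tightening, none fatal: (i) $\beta$ should be chosen compactly supported, which is possible because $\int_{\R^2}(\omega_1-\omega_0)=\A(\psi_1(B))-\A(B)=0$ for a large ball $B$ outside of which $\psi_1=\id$, so the class of $\omega_1-\omega_0$ vanishes in compactly supported cohomology; without this, $\alpha=\beta-\de h$ is not compactly supported and the completeness of the Moser flow is not automatic; (ii) the assertion that a time-dependent field tangent to $C$ and vanishing at the double points has flow preserving $C$ deserves a sentence (uniqueness of ODE solutions keeps each double point fixed and each open arc invariant); (iii) connectivity of the compactly supported area-preserving diffeomorphism group of $\R^2$ is a real theorem (Smale plus Moser); if you want to stay elementary, the Alexander-type isotopy $F_t(x)=\Phi(tx)/t$ used in the paper's Lemma \ref{sapi} (after translating so that $\Phi(0)=0$) connects $\Phi$ to $d\Phi(0)\in\mathrm{SL}(2,\R)$ through area-preserving maps, and $\mathrm{SL}(2,\R)$ is connected, which suffices since the theorem does not demand that the final isotopy be compactly supported.
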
  

Theorem \ref{thm1} is proved in Section \ref{sec:proof}. Problems related to the existence of a topological isotopy (without area condition) taking $C$ to $C'$ were studied by many authors, see e.g.~\cite{ Carter, Vassiliev, Merkov}.

From the point of view of symplectic geometry $C$ is an immersed Lagrangian submanifold, and area-preserving isotopies are Hamiltonian isotopies. For related questions in higher dimensions see e.g.~\cite{Hind2, Hind1, Georgios}.

In short outline, our proof of Theorem \ref{thm1} is as follows. First, we construct an isotopy $\chi_\tau$ which takes $C$ to $C'$ and such that for every disk $D$ of $C$ we have $\A(\chi_\tau(D))=\A(D)$ for all $\tau$. We call such an isotopy \emph{semi-area-preserving with respect to $C$}. The semi-area-preserving isotopy is constructed from the disk-area-preserving isotopy $\psi_\tau$ by first composing it with a time dependent scaling so that the resulting isotopy $\gamma_\tau$ shrinks the area of each disk of $C$ for all times. The  isotopy $\gamma_\tau$ is then modified: we introduce a time-dependent area form $\omega_\tau$ such that the area of every disk of $C$ is constant under $\gamma_\tau$ with respect to $\omega_\tau$ and then we use Moser's trick to find an isotopy $\phi_\tau$ such that $\phi_\tau ^* \de x \wedge \de y = \omega_\tau$, and hence the isotopy $\phi_\tau \circ \gamma_\tau $ is semi-area-preserving, see Section \ref{sec:sapi}. Second, we subdivide the semi-area-preserving isotopy into small time steps and use a cohomological argument to show the existence of an area-preserving isotopy, see Section  \ref{sec:delta}.  
 
For simpler notation below, we assume that all maps are smooth and that all immersions are self transverse.

\subsection*{Acknowledgements}
I would like to thank Tobias Ekholm for helpful discussions and for supervising the master thesis on which this paper is based. I would also like to thank Georgios Dimitroglou Rizell for the central ideas of the proof of Lemma \ref{moser2} and Lemma \ref{sapi}.

\section{Background}\label{sec:back}
In this section we introduce notation and discuss background material on Hamiltonian vector fields on surfaces.

Let $M$ be a surface and let $v:M\rightarrow TM$ a vector field with compact support. We write $\Phi_v^{t} \colon M\to M$ for the time $t$ flow of $v$.

Let $\omega$ be a symplectic form on $M$ and write $I:T^*M\rightarrow TM$ for the isomorphism defined through the equation
\begin{equation*}
 \alpha(\eta)=\omega(\eta,I(\alpha)) \text{ for all } \alpha\in T^{\ast}M,\;\eta\in T_xM. 
\end{equation*} 
Let $H:M\rightarrow \R$ be a smooth function with compact support. The vector field $X_H=I(dH)$ is the \emph{Hamiltonian vector field} of $H$ and its flow is area-preserving.

Let $C$ be an immersion of $S^1$ into the plane and let $\varphi:S^1\rightarrow \R^{2}$ be a parametrization of $C$. Write $e(s)$ for the unit vector field along $C$ such that $ \left(\frac{\de\varphi}{\de s}(s),e(s)\right)$ is a positively oriented basis of $\R^{2}$ for all $s\in S^1$. Then for all sufficiently small $\epsilon>0$ the map $\Phi : S^1\times (-\epsilon,\epsilon) \rightarrow \R^2$, 
\begin{equation}\label{e:nbhdmap}
\Phi(s,t)=\varphi(s)+te(s)
\end{equation}  
parametrizes a neighborhood $C^\epsilon$ of $C$. Notice that if $C$ has double points then this parametrization is not one-to-one. 

Let $\de x \wedge \de y$ be the standard symplectic form on $\R^{2}$ and consider coordinates $(s,t)$ on $S^{1}\times\R=(\R/2\pi\mathbb{Z})\times\R$ with the corresponding symplectic form $\de s \wedge \de t$. The following lemma is a special case of Moser's lemma, see e.g.~\cite{moser} for a proof. 

\begin{lma}\label{moser}
Let $C$ be an immersion of $S^{1}$ in $\R^{2}$ and let $\Phi$ be as in \eqref{e:nbhdmap}. Then there exists $\delta>0$ and a diffeomorphism $\vartheta:S^1\times \R \rightarrow S^1\times \R$ with $\vartheta(s,0)=(s,0)$ such that 
\begin{equation*} 
(\Phi \circ \vartheta)^* \de x \wedge \de y = \de s \wedge \de t,
\end{equation*}
for all $|t|<\delta$.
\end{lma}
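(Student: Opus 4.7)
The plan is to apply Moser's trick on a strip about $S^1\times\{0\}$ where $\Phi$ is a local diffeomorphism. Setting $\omega:=\Phi^{\ast}(\de x\wedge\de y)$, we may write $\omega=f(s,t)\,\de s\wedge \de t$ for a smooth function $f$. Along the zero section, $f(s,0)=(\de x\wedge\de y)(\varphi'(s),e(s))>0$ since $(\varphi'(s),e(s))$ is a positively oriented basis, so $f>0$ on some open strip $U=S^1\times(-\epsilon',\epsilon')$ on which both $\omega$ and $\omega_0:=\de s\wedge \de t$ are positive area forms.

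Next I would linearly interpolate $\omega_\tau:=(1-\tau)\omega_0+\tau\omega$, which remains a positive, hence nondegenerate, area form on $U$ for every $\tau\in[0,1]$, and look for a primitive $\alpha$ of $\omega-\omega_0=(f-1)\,\de s\wedge\de t$ that vanishes along $S^1\times\{0\}$. The natural choice
\begin{equation*}
\alpha(s,t):=\left(\int_0^{t}\bigl(f(s,t')-1\bigr)\,\de t'\right)\de s
\end{equation*}
satisfies $\de\alpha=\omega-\omega_0$ on $U$ and $\alpha|_{t=0}=0$. I would then define the time dependent vector field $X_\tau$ on $U$ by $\iota_{X_\tau}\omega_\tau=-\alpha$; nondegeneracy of $\omega_\tau$ makes $X_\tau$ smooth, and the vanishing of $\alpha$ along the zero section forces $X_\tau|_{t=0}=0$.

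Integrating $X_\tau$ gives a local flow $\vartheta_\tau$, and the standard Cartan-formula computation
\begin{equation*}
\frac{\de}{\de\tau}\vartheta_\tau^{\ast}\omega_\tau=\vartheta_\tau^{\ast}\bigl(\de(\iota_{X_\tau}\omega_\tau)+\omega-\omega_0\bigr)=\vartheta_\tau^{\ast}(-\de\alpha+\de\alpha)=0
\end{equation*}
gives $\vartheta_1^{\ast}\omega=\omega_0$ wherever $\vartheta_\tau$ is defined for all $\tau\in[0,1]$. To turn this into a diffeomorphism of all of $S^1\times\R$ as demanded by the statement, I would premultiply $X_\tau$ by a smooth bump function equal to $1$ on a smaller strip $S^1\times(-\delta,\delta)\subset U$ and compactly supported in $U$, and take $\vartheta$ to be the time-$1$ map of the resulting globally defined flow; the identity $\vartheta(s,0)=(s,0)$ then follows from $X_\tau|_{t=0}=0$.

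The only step requiring genuine care is the choice of $\delta\in(0,\epsilon')$ guaranteeing that the un-cutoff flow actually remains in $U$ on $S^1\times(-\delta,\delta)$ for the whole interval $\tau\in[0,1]$, since we need the pullback identity to hold on an honest open neighborhood of the zero section. This follows from compactness of $S^1$ and the vanishing of $X_\tau$ along it by a standard continuity argument, but it is precisely what forces the conclusion to be stated only for $|t|<\delta$ rather than on the full tubular neighborhood.
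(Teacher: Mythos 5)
The paper offers no proof of this lemma at all --- it is quoted as a special case of Moser's lemma with a pointer to the literature --- so your write-up is supplying exactly the argument that citation stands for, and its overall structure is right: positivity of the coefficient $f$ in $\omega=f\,\de s\wedge \de t$ along $S^1\times\{0\}$, hence on a strip by compactness; the linear interpolation $\omega_\tau$, which stays nondegenerate because it is a convex combination of positive area forms; a primitive vanishing on the zero section, forcing $X_\tau|_{t=0}=0$ and therefore $\vartheta(s,0)=(s,0)$; and a cutoff plus continuity argument to get a globally defined diffeomorphism of $S^1\times\R$ with the pullback identity holding on a possibly smaller strip $|t|<\delta$. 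The one error is a sign in the primitive: for $\alpha=\left(\int_0^{t}\bigl(f(s,t')-1\bigr)\,\de t'\right)\de s$ one computes $\de\alpha=(f-1)\,\de t\wedge\de s=\omega_0-\omega$, not $\omega-\omega_0$, so with your convention $\iota_{X_\tau}\omega_\tau=-\alpha$ the Cartan computation actually yields $\vartheta_\tau^{\ast}\bigl(-\de\alpha+\omega-\omega_0\bigr)=2\,\vartheta_\tau^{\ast}(\omega-\omega_0)\neq 0$. Replacing $\alpha$ by its negative (equivalently, defining $X_\tau$ by $\iota_{X_\tau}\omega_\tau=+\alpha$) repairs this, and nothing else is affected since $-\alpha$ still vanishes along $S^1\times\{0\}$. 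With that correction the argument is complete and is precisely the standard Moser-trick proof the paper intends.
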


Below we will often combine Lemma \ref{moser} with a Hamiltonian isotopy
of $S^1\times \R$.
In the following lemma we use this argument to construct area-preserving isotopies between nearby curves $C$ and $C'$ which agree near double points. 
We will use the following terminology: For $C \subset \R^2$ an immersed circle, we call an arc $A \subset C$ a \emph{maximal smooth arc} of $C$  if  $A \cap \{x_i\}_{i=1}^{n} = \{x_i,x_j\}= \partial A$, where  $\{x_i\}_{i=1}^{n}\subset C$ are the double points of $C$.
 
\begin{lma}\label{lma5}
Let $C$ be an immersion of $S^1$ into $\R^2$ and let $\xi:S^1 \times(-\epsilon, \epsilon)\rightarrow \R^2$ be an area-preserving parametrization of a neighborhood $C^\epsilon$ of $C$ as in Lemma \ref{moser}. Assume that $C'$ is an immersion of $S^{1}$ into $\R^{2}$ which coincides with $C$ in a neighborhood $U_x$ of every double point $x$ of $C$ and such that there is a function $g: S^1 \rightarrow (-\epsilon, \epsilon)$  with $C'=\xi(\Gamma)$, where  $\Gamma$ is the graph of $g$. If there exists a disk-area-preserving isotopy taking $C$ to $C'$ then there exists an area-preserving isotopy of the plane taking $C'$ to $C$.
\end{lma}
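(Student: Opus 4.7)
The plan is to construct a smooth compactly supported Hamiltonian $H:\R^{2}\to\R$ whose time-one Hamiltonian flow takes $C$ to $C'$; since area-preserving isotopies are reversible, this gives the conclusion. In the $(s,t)$-coordinates provided by $\xi^{-1}$ the form becomes $\de s\wedge\de t$, the curve $C$ becomes $S^{1}\times\{0\}$, and $C'$ becomes the graph $\Gamma$ of $g$, while the hypothesis $C=C'$ on each $U_{x}$ forces $g$ to vanish on a neighborhood of every preimage in $S^{1}$ of a double point. My ansatz is
\[
H\circ\xi(s,t)=G(s)\,\rho(t),
\]
where $\rho:\R\to[0,1]$ is a bump identically $1$ on an interval containing the image of $g$ and supported in $(-\epsilon,\epsilon)$, and $G$ is an antiderivative of $g$ to be chosen. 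A direct computation with the formula $X_{H}=-\partial_{t}H\,\partial_{s}+\partial_{s}H\,\partial_{t}$ shows that $X_{H}=g(s)\,\partial_{t}$ on every curve $\{t=\tau g(s)\}$ for $\tau\in[0,1]$, so the flow of $X_{H}$ pushes $S^{1}\times\{0\}$ through this family of graphs and reaches $\Gamma$ at time one.

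The technical core of the proof is to choose $G$ so that $G(s)\,\rho(t)$ descends to a smooth function on the immersed neighborhood $C^{\epsilon}\subset\R^{2}$, after which extending by zero produces the desired global Hamiltonian. Because $\rho\equiv 1$ near $t=0$ and $g\equiv 0$ near every preimage of a double point, $G$ is automatically locally constant near these preimages, and the compatibility of the two sheets of $\xi$ meeting at a double point $x$ reduces to the equalities $G(p)=G(p')$ for all pairs of preimages of $x$. Together with the single-valuedness of $G$ on $S^{1}$, this is a purely cohomological condition.

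I will encode this condition on the connected planar $1$-complex $\Gamma_{C}$ whose vertices are the double points of $C$ and whose edges are the maximal smooth arcs of $C$, oriented by the parametrization $\varphi$. The arc integrals $c(A_{i}):=\int_{I_{i}}g\,\de s$ form a $1$-cochain on $\Gamma_{C}$, and a suitable $G$ exists if and only if $c$ is a coboundary. The key identification, and the step I expect to be the main obstacle, is that for each disk $D$ of $C$ the value of $c$ on the boundary cycle $\partial D$ equals $\pm(\A(D')-\A(D))$, where $D'$ is the disk of $C'$ corresponding to $D$ and the sign is determined by whether traversing $\partial D$ with $D$ on the left agrees with the parametrization orientation of each edge. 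This identification uses only that $\xi$ is area-preserving, so the signed area between an arc $A_{i}$ and its normal pushoff along $C'$ is exactly $\int_{I_{i}}g\,\de s$; the sign bookkeeping is the delicate point. Since $\Gamma_{C}$ is a connected planar graph, its first homology is spanned by the boundary cycles of the bounded faces, which are precisely the disks of $C$, and the disk-area-preserving hypothesis says exactly that $c$ vanishes on each of these generators of $H_{1}(\Gamma_{C})$.

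With $G$ in hand, $H$ is well-defined, smooth, and compactly supported in $C^{\epsilon}$: near a double point $x$ both sheets of $\xi$ yield the common constant value $G(p)\,\rho(t)=G(p)$ on the overlap region, since $\rho\equiv 1$ there, so $H$ is locally constant near $x$, while away from double points $\xi$ is a local diffeomorphism and the $(s,t)$-formula is smooth. Extending by zero outside $C^{\epsilon}$ gives a smooth compactly supported function on $\R^{2}$, and the time-one flow of its Hamiltonian vector field is the required area-preserving isotopy.
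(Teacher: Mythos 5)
Your overall strategy is the same as the paper's: build a compactly supported Hamiltonian from an antiderivative $G$ of $g$, check that it descends to the immersed neighborhood by showing $G$ takes equal values at the two preimages of each double point, and derive that equality from the disk-area hypothesis by observing that the arc of $S^1$ between the two preimages maps to a closed $1$-chain bounding a combination of disks (your graph-cohomology phrasing on $\Gamma_C$ is just a repackaging of this). That core argument is correct.

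There is, however, one step that fails as written: the single global product ansatz $H\circ\xi(s,t)=G(s)\rho(t)$ is not well defined on the overlap regions. Near a double point $x$ the two sheets of $C^\epsilon$ are transverse strips of full width $2\epsilon$, so their intersection contains points whose $t$-coordinate on one sheet is close to $\pm\epsilon$ (where $\rho$ is tapering to $0$) while the $t$-coordinate on the other sheet is near $0$ (where $\rho=1$). At such a point the two sheets propose the values $G(p)\rho(t_1)\neq G(p)\rho(t_2)$, so your assertion that ``$\rho\equiv 1$ there'' is false on the full overlap, and shrinking $\epsilon$ does not help since the overlap always spans the full width of the annulus. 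The repair is exactly the device the paper uses: abandon the product form near double points and instead declare $H$ to be the constant $G(p)$ times a cutoff taken in the plane (i.e.\ a function of the point of $\R^2$, not of $(s,t)$) on a planar neighborhood $U_x$ of each double point, using the product form only on the simply connected pieces of $C^\epsilon\setminus\bigcup U_x$ where $\xi$ is injective; since $g\equiv 0$ near the preimages of double points, the two definitions glue smoothly and $X_H$ still vanishes near each $x$, which is all the flow argument needs. With that modification your proof coincides with the paper's.
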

\begin{proof}
Shrink $C^\epsilon$ so that we still have $C \cup C' \subset C^\epsilon$, but so that the parametrization is 1-1 outside $\bigcup U_x$, i.e.\ so that $\overline{C^\epsilon- \bigcup U_x}$ consists of a number of simply connected components $V_A$ where each component corresponds to a maximal smooth arc $A$ of $C$.
Let $W$ be an open neighborhood  of $C \cup C'$ so that $\overline{W} \subset C^\epsilon$ and so that $V_A \cap W$ and $U_x \cap W$ are simply connected for all $V_A, U_x$.  Let $G : S^1 \rightarrow \R$ be defined by $G(s) = \int_0 ^s {g(s') \de s'}$, and let $\tilde{G}: \R^2 \rightarrow \R$ be a function satisfying 
\begin{equation*}
\tilde{G}(x) =
\begin{cases}
G((\xi^{-1})^1(x)) & \text{for }x \in W \cap V_A \\
G((\xi^{-1})^1(x')) & \text{for }x \in W \cap U_{x'} \\
0		& \text{for } x \notin C^\epsilon				
\end{cases}
\end{equation*}   
where $\xi^{-1}= ((\xi^{-1})^1, (\xi^{-1})^2)$.
Then $\tilde{G}$ is a well-defined function: Suppose that $\xi(s_1,t_1) = \xi(s_2,t_2)$ for $s_1 \neq s_2$. Then $\xi(s_1,t_1) \subset U_x$ for some $x$, and since $\tilde{G}$ is constant in $U_x\cap W$ we can assume that $\xi(s_1,t_1)=x$.  But clearly $\xi((s_1,s_2) \times\{0\})$ is a 1-chain, so it bounds a number of disks of $C$. Since every disk of $C'$ has the same area as the corresponding disk of $C$ we thus have $ \int_{s_1}^{s_2} {g(s) \de s} = 0 $, so $G(s_1) = G(s_2)$.

The Hamiltonian vector field of $\tilde{G}$ in the parametrization of $C^ \epsilon$ is $X_{\tilde G}=-g(s)\,\frac{\partial}{\partial t}$ for $(s,t) \in W \cap V_A$ and $X_{\tilde G}=0$ in $U_x \cap W$. Hence its time $1$-flow takes $(s,g(s))$ to $(s,0)$ for all $s$ and we get an area-preserving isotopy of the plane taking $C'$ to $C$.
\end{proof}

\section{Construction of semi-area-preserving isotopies}\label{sec:sapi}
In this section we construct a semi-area-preserving isotopy from a disk-area-preserving isotopy.

Let $C$ and $C'$ be two immersions of $S^1$ into $\R^2$ such that there exists a disk-area-preserving isotopy $\phi_t$ taking $C$ to $C'$. 
Without loss of generality we can assume that $\phi_t$ has support in some $B_r$, where $B_r$ denotes the open disk of radius $r$ centered at $0$. Let $\gamma_t:\R^2 \rightarrow \R^2$, $t\in[0,1]$, $\gamma_0=\id$, be an isotopy of the plane with support in  $B_{r+1}$, acting as follows. First let $\gamma_t$ shrink $B_r$ to some $B_{\epsilon r}$ radially, where $\epsilon $ is small and depends on the area of the disks of $C$. Next we let $\gamma_t$ take the shrunken curve $C$ to the shrunken curve $C'$ by using $\epsilon\phi_t(\epsilon x)$, and then finally we let $\gamma_t$ enlarge $B_{\epsilon r}$ to $B_r$ again, so that we get $\gamma_1(C)=C'$. By choosing $\epsilon$ small enough we thus get an isotopy $\gamma_t$ of the plane taking $C$ to $C'$ such that $\A(\gamma_t(D)) < \A(D)$ for every disk $D$ of $C$, and for all $t \in (0,1)$. 

Next we use Moser's trick to find an  isotopy $\psi_t:\R^2 \rightarrow \R^2,t \in [0,1], \psi_0=\id$,  such that $\chi_t= \psi_t \circ \gamma_t$ is semi-area-preserving with respect to $C$. So if we then can take $\psi_1 \gamma_1(C)$ to $C'$ with a semi-area-preserving isotopy we get a semi-area-preserving isotopy taking $C$ completely to $C'$. We start with the following lemma.

\begin{lma}\label{moser2}
Let $\gamma_t, C,C'$ be as above. Then there is an isotopy $\psi_t:\R^2 \rightarrow \R^2 ,t \in [0,1], \psi_0=\id$,  such that $\int_{\psi_t \gamma_t (D)}{\de x \wedge \de y}=\int_{D}{\de x \wedge \de y}$ for every disk $D$ of $C$. Moreover, $\psi_t$ can be chosen so that $\psi_1^* \de x \wedge \de y = \de x \wedge \de y$. 
\end{lma}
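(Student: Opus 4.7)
The plan is, as hinted in the introduction, to build a smooth family of volume forms $\omega_t = f_t\,\de x \wedge \de y$ on $\R^{2}$ with $\omega_0 = \omega_1 = \de x \wedge \de y$, such that $\omega_t - \de x \wedge \de y$ is compactly supported with $\int_{\R^{2}}(\omega_t - \de x \wedge \de y) = 0$, and such that $\int_{\gamma_t(D)}\omega_t = \A(D)$ for every disk $D$ of $C$. Moser's trick then yields an isotopy $\tilde\psi_t$ with $\tilde\psi_t^{\ast}\omega_t = \de x \wedge \de y$, and setting $\psi_t := \tilde\psi_t^{-1}$ gives $\psi_t^{\ast}(\de x \wedge \de y) = \omega_t$. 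It follows that
\begin{equation*}
\int_{\psi_t\gamma_t(D)} \de x \wedge \de y \;=\; \int_{\gamma_t(D)} \psi_t^{\ast}(\de x \wedge \de y) \;=\; \int_{\gamma_t(D)} \omega_t \;=\; \A(D),
\end{equation*}
and $\psi_1^{\ast}(\de x \wedge \de y) = \omega_1 = \de x \wedge \de y$.

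To construct $\omega_t$, label the disks of $C$ by $D_1,\dots,D_n$, set $E_i(t) := \gamma_t(D_i)$ and $\delta_i(t) := \A(D_i) - \A(E_i(t))$; by the construction of $\gamma_t$ we have $\delta_i(t) \geq 0$ with $\delta_i(0) = \delta_i(1) = 0$. For each $i$ I would choose a smoothly varying nonnegative function $h_i(t)$ with support compactly contained in $E_i(t)$ and $\int_{\R^{2}} h_i(t)\,\de x\wedge \de y = 1$, say by transporting a fixed bump from $D_i$ by $\gamma_t$. Since $\gamma_t$ has support in $B_{r+1}$, any annulus $B_R\setminus\overline{B_{r+1}}$ lies entirely in the unbounded component of $\R^{2}\setminus\gamma_t(C)$ for every $t$; I would choose $R$ large and a fixed nonnegative bump $\bar s$ supported in this annulus with $I := \int_{\R^{2}}\bar s\,\de x\wedge \de y$ large enough that $s_t := \bigl(\sum_i \delta_i(t)/I\bigr)\bar s$ satisfies $s_t < 1$ everywhere. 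Setting
\begin{equation*}
\omega_t := \Bigl(1 + \sum_{i=1}^{n}\delta_i(t)\,h_i(t) - s_t\Bigr)\,\de x\wedge \de y,
\end{equation*}
direct computation shows that $\omega_t$ is a positive volume form with the three properties above.

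For the Moser step, $\dot\omega_t$ is a compactly supported $2$-form on $\R^{2}$ whose total integral vanishes (differentiate $\int_{\R^{2}}(\omega_t - \de x\wedge \de y) = 0$ in $t$), hence $\dot\omega_t = \de\alpha_t$ for a compactly supported $1$-form $\alpha_t$ depending smoothly on $t$. Solving $\iota_{V_t}\omega_t = -\alpha_t$ (uniquely possible on a surface since $\omega_t$ is a volume form) and integrating the resulting time-dependent vector field yields $\tilde\psi_t$ with $\tilde\psi_t^{\ast}\omega_t = \omega_0 = \de x\wedge \de y$, and then $\psi_t := \tilde\psi_t^{-1}$ has all the required properties. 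The main subtlety I anticipate is exactly the cohomological condition $\int_{\R^{2}}(\omega_t - \de x\wedge \de y) = 0$ needed for Moser to apply: naively bumping up $\omega_t$ only inside the $E_i(t)$ to enforce the disk-area conditions would leave nonzero total integral and block the construction of $\alpha_t$; routing the excess out to the unbounded component via the compensating bump $s_t$, made small using the freedom in $R$, is what keeps the construction in the correct cohomology class.
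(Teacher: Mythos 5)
Your proposal is correct and follows the same overall strategy as the paper: build a family of area forms $\omega_t$ with $\int_{\gamma_t(D_i)}\omega_t=\A(D_i)$ and $\omega_0=\omega_1=\de x\wedge\de y$, apply Moser's trick, and invert the resulting isotopy. The implementations differ in how the Moser step is set up. The paper writes $\omega_t=\sum_i\omega_t^i$ with each $\omega_t^i=\de\bigl(\tfrac12\sigma_t^i(\rho^2)\,\de\theta\bigr)$ a rotationally symmetric exact form centered at the moving point $\gamma_t(\xi_i)$, concentrating the required extra area in a small disk inside $\gamma_t(D_i)$ and relaxing back to $\tfrac1n\de x\wedge\de y$ in an annulus outside the curve; the explicit primitives then hand you the Moser one-forms directly, at the price that the resulting vector field is not compactly supported (it is asymptotically the constant field coming from the moving centers) and existence of its flow must be argued via a global Lipschitz bound. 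You instead enforce $\int_{\R^2}(\omega_t-\de x\wedge\de y)=0$ explicitly by routing the excess to a compensating bump in the unbounded component, which lets you take a compactly supported primitive $\alpha_t$ (via $H^2_c(\R^2)\cong\R$) and hence a compactly supported Moser field whose flow is automatically complete. Your version also makes transparent where the shrinking property of $\gamma_t$ enters: $\delta_i(t)\ge 0$ means you only ever add nonnegative density inside the disks, so positivity of $\omega_t$ holds however concentrated the bumps $h_i$ must be --- exactly the point the paper makes when explaining why $\gamma_t$ is arranged to decrease the disk areas. Both arguments are complete; yours is arguably the more standard and economical packaging of the Moser step.
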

\begin{proof}
Let $D_1,D_2,\dotsc,D_n$ be the disks of $C$. For each $D_i$ choose a point $\xi_i \in D_i$, and let $r_i(t):[0,1] \rightarrow (0, \infty)$ be such that $B_{r_i(t), \gamma_t(\xi_i)}\subset \gamma_t(D_i)$ for all $0 \le t \le1$, where $B_{\rho, p}$ is the open disk of radius $\rho$ centered at $p$. 

For each disk $D_i$ let $\sigma_t^i:[0,\infty) \rightarrow (0,\infty)$ be a smooth 1-parameter family of functions such that for each $t\in[0,1]$ we have, if $(\rho, \theta)$ are polar coordinates centered at $\gamma_t(\xi_i)=(x_i(t),y_i(t))$, that $\omega_t^i= \de(\frac{1}{2} \sigma_t^i(\rho^2) \de \theta)$ is nondegenerate and satisfies
\begin{equation}
\label{area}
\int_{\gamma_t(D_i)}{\omega_t^i}= \int_{D_i}{\de x \wedge \de y} - \frac{n-1}{n}\int_{\gamma_t(D_i)}{\de x \wedge \de y}.
\end{equation}
 Also choose $\sigma_t^i$ so that  
\begin{align}
\label{eq:part}
& \omega_t^i=\frac{1}{n}\de x \wedge \de y  \qquad \text{ in } B_r \setminus B_{r_i(t), \gamma_t(\xi_i)} \\ 
\label{end}
&\omega_0^i=\frac{1}{n}\de x \wedge \de y = \omega_1^i 
\end{align}
and so that $\sigma_t^i(s)=\frac{1}{n}s$ outside some $B_{r'}$, where  $r'>r$ is chosen big enough to be independent of $t$ and $D_i$. 

Such a $\sigma_t^i$ we can find due to the fact that we want $\omega_t^i$ to satisfy $\int_{\gamma_t(D_i)}{\omega_t^i} > \frac{1}{n}\int_{\gamma_t(D_i)}{\de x \wedge \de y}$. So even if the disk $B_{r_i(t), \gamma_t(\xi_i)}$ is small we can let $\frac{\de}{\de s} \sigma_t^i(s)$ be large in this disk to obtain \eqref{area}, which need not have been the case if the area of $\gamma_t(D_i)$ was greater than the area of $D_i$ for some $t$. We use the space between $B_r$ and $B_{r'}$ to decrease  $\frac{\de}{\de s} \sigma_t^i(s)>0$ so that we get $  \sigma_t^i(s)=\frac{1}{n}s$ outside $B_{r'}$.

Now let $\omega_t = \sum_{i=1}^n \omega_t^i$. Then 
\begin{align*}
\int_{\gamma_t(D_i)}{\omega_t} =& \int_{\gamma_t(D_i)}{\omega_t^i} +  \sum_{j=1, j \neq i}^n  \int_{\gamma_t(D_i)}{\omega_t^j} \\
=&\int_{D_i} \de x \wedge \de y - \frac{n-1}{n} \int_{\gamma_t(D_i)}\de x \wedge \de y +  \frac{n-1}{n} \int_{\gamma_t(D_i)}\de x \wedge \de y \\
=&\int_{D_i} \de x \wedge \de y. 
\end{align*}
So if we can find an isotopy $\psi_t$ satisfying $\omega_t=\psi_t^*\omega_0$ for all $t$ then $\psi_t \circ \gamma_t$ will be semi-area-preserving with respect to $C$. 

To do this we use Moser's trick. Namely, for each disk $D_i$ and for each $t$ let $\mu_t^i$ be the 1-form $\mu_t^i= \frac{\de}{\de t}(\frac{1}{2}\sigma_t^i(\rho^2) \de \theta)$, and let $v_t$ be the vector field defined by $\iota_{v_t}(\omega_t)+ \sum_{i=1}^n \mu_t^i =0$, where $\iota_{v_t}(\omega_t)$ is the 1-form satisfying $\iota_{v_t}(\omega_t)(\eta)=\omega_t(v_t, \eta)$ for all $\eta \in T_x\R^2$. Then we get that $v_t = \sum_{i=1}^n {\frac{\de x_i}{\de t}\frac{\partial}{\partial x}-  \frac{\de y_i}{\de t}\frac{\partial}{\partial y}}$ outside $B_{r'}$, since here we have that
\begin{equation*}
\sigma_t^i(\rho^2) \de \theta = \frac{1}{n}\rho^2 \de \theta = \frac{1}{n} ((x-x_i(t))\de y - (y-y_i(t))\de x)
\end{equation*}
so $\omega_t^i = \frac{1}{n}\de x \wedge \de y$ and $\mu_t^i= \frac{1}{n}( \frac{\de y_i}{\de t} \de x -  \frac{\de x_i}{\de t} \de y )$ here. Thus $v_t$ satisfies a Lipschitz condition with the same Lipschitz constant $L$ for all $x \in \R^2$ and for all $t\in [0,1]$, and hence we can find an isotopy $\chi_t:\R^2 \rightarrow \R^2, 0\le t \le 1$, such that $\chi_0 = \id$ and $\frac{\de \chi_t}{\de t} = v_t \chi_t$. Now we get 
\begin{align*}
\frac{\de}{\de t}(\chi_t^* \omega_t) = \chi_t^*(\de \iota_{v_t}(\omega_t)+ \sum{\de \mu_t^i})=0,
\end{align*}  
so $
\chi_t^*\omega_t = \chi_0^*\omega_0= \de x \wedge \de y \text{ for all  }t \in [0,1]$.
Letting $\psi_t$ be the inverse of $\chi_t$ for each $0 \le t \le 1$ we get that $\omega_t = \psi_t^* \de x \wedge \de y$ and hence that $\psi_t \circ \gamma_t$ is a semi-area-preserving isotopy with respect to $C$, and by \eqref{end} we have $\psi_1^* \de x \wedge \de y = \de x \wedge \de y$.
\end{proof}

Now by finding an area-preserving isotopy taking $\psi_1\gamma_1(C)$ to $\gamma_1(C)=C'$  we can prove the main lemma of this section.
\begin{lma}\label{sapi}
If $C$ and $C'$  are immersions of $S^1$ into $\R^2$ such that there exists a disk-area-preserving isotopy taking $C$ to $C'$, then there exists a semi-area-preserving isotopy with respect to $C$ taking $C$ to $C'$.
\end{lma}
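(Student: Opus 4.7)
The plan is to construct the desired semi-area-preserving isotopy by concatenating the one coming from Lemma \ref{moser2} with an area-preserving isotopy that corrects the final position. First, build $\gamma_t$ exactly as in the paragraph preceding Lemma \ref{moser2}, and then apply Lemma \ref{moser2} to produce $\psi_t$ so that $\chi_t=\psi_t\circ\gamma_t$ is semi-area-preserving with respect to $C$ and $\psi_1^{\ast}\de x\wedge \de y=\de x\wedge \de y$. The only defect is that $\chi_1(C)=\psi_1(C')$ rather than $C'$, so the task reduces to finding an area-preserving isotopy $\Lambda_s$, $s\in[0,1]$, with $\Lambda_0=\id$ and $\Lambda_1(\psi_1(C'))=C'$. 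Given such $\Lambda_s$, the concatenation of $\chi_t$ (reparametrized to $[0,\tfrac12]$) with $s\mapsto\Lambda_{2s-1}\circ\chi_1$ on $[\tfrac12,1]$ is smooth, takes $C$ to $C'$, and is still semi-area-preserving with respect to $C$, since $\A(\Lambda_s\chi_1(D))=\A(\chi_1(D))=\A(D)$ for each disk $D$ of $C$.

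To construct $\Lambda_s$ I would first observe, by inspecting the proof of Lemma \ref{moser2}, that outside the ball $B_{r'}$ one has $\omega_t=\de x\wedge \de y$ and each $\mu_t^i$ is a constant-in-space $1$-form, so $v_t$ is a constant-in-space vector field there. Consequently $\psi_1$ agrees outside $B_{r'}$ with some translation $T$ of $\R^{2}$, and $h:=T^{-1}\circ \psi_1$ is compactly supported and area-preserving. Since $H^{1}_c(\R^{2})=0$, every compactly supported symplectomorphism of $\R^{2}$ is Hamiltonian, so $h$ is the time-$1$ map of a Hamiltonian (hence area-preserving) isotopy $h_s$ starting at $\id$. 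Combined with the straight-line isotopy of translations $T_s(x)=x+sv$ from $\id$ to $T$, the composition $\Psi_s=T_s\circ h_s$ is an area-preserving isotopy from $\id$ to $\psi_1$, and setting $\Lambda_s=\Psi_{1-s}\circ \psi_1^{-1}$ yields an area-preserving isotopy with $\Lambda_0=\id$ and $\Lambda_1=\psi_1^{-1}$, as required.

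The main obstacle is producing the area-preserving isotopy $\Psi_s$ connecting $\id$ to $\psi_1$. This rests on two inputs: the explicit translation-at-infinity description of $\psi_1$ extracted from the Moser construction in Lemma \ref{moser2}, and the cohomological fact that compactly supported symplectomorphisms of $\R^{2}$ are Hamiltonian. Everything else in the proof is routine bookkeeping: gluing the two pieces smoothly in time and checking that the disk-area condition is preserved along each segment of the concatenation.
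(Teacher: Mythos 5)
Your first step coincides with the paper's: both reduce the lemma to producing an area-preserving isotopy of the plane carrying $\psi_1\gamma_1(C)$ back to $\gamma_1(C)=C'$, and both observe that prepending the semi-area-preserving $\psi_t\circ\gamma_t$ then finishes the job. Where you diverge is in how that correcting isotopy is built. The paper uses the scaling trick $F_t(x)=\psi_1(tx)/t$, which is area-preserving because $dF_t(x)=d\psi_1(tx)$ has determinant one; this deforms $\psi_1$ to its linearization $d\psi_1(0)$, and a path in the determinant-one linear group then reaches the identity. You instead decompose $\psi_1$ as (translation at infinity) $\circ$ (compactly supported symplectomorphism) and connect each factor to the identity separately. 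Your extraction of the translation from the Moser vector field is sound (strictly, $v_t$ is only spatially constant outside $B_{r'}$, so $\chi_1$ is a translation only outside a somewhat larger ball, but that changes nothing). The weak point is the justification of the key step: $H^1_c(\R^2)=0$ controls the \emph{flux} obstruction, i.e.\ it upgrades a symplectic isotopy to a Hamiltonian one, but it does not tell you that your compactly supported symplectomorphism $h$ is symplectically isotopic to the identity in the first place. That is a $\pi_0$ statement --- connectedness of $\mathrm{Symp}_c(\R^2)$ --- which is true but rests on the contractibility of $\mathrm{Diff}_c(\R^2)$ (Smale) combined with a Moser argument; and note that for this lemma an area-preserving isotopy suffices, so the Hamiltonian upgrade is not even needed. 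So your route works, but it imports a substantially heavier black box than the paper's elementary scaling argument, and as written the citation offered does not cover the step it is meant to justify. (For what it is worth, the paper's own scaling trick tacitly assumes $\psi_1(0)=0$ so that $F_t$ converges as $t\to 0$; one would normalize by a translation first, which is exactly the ingredient your approach makes explicit.)
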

\begin{proof}
Let $\gamma_t, \psi_t$ be constructed as above, and let
$F_t:\R^2 \rightarrow \R^2$, $0\le t \le 1$ be defined as
\begin{equation*}
F_t(x)= 
\begin{cases}
\frac{\psi_1(tx)}{t} & t \neq 0 \\
d \psi_1(0)x & t=0.
\end{cases}
\end{equation*}
Then $ d F_t(x) = d \psi_1(tx)$ for all $t$ and since $\psi_1^*\de x \wedge \de y =\de x \wedge \de y$ we get that $F_{1-t}$ is an area-preserving isotopy taking $\psi_1\gamma_1(C)$ to $d \psi_1(0)(\gamma_1(C))$. Moreover, since $\text{det}(d \psi_1(0))=1$ there is a one-parameter family of linear diffeomorphisms $A_t \in \text{SO}(2)$ such that $A_0 =  d \psi_1(0)$, $A_1 =\id$, and hence we can find an area-preserving isotopy of the plane taking $\psi_1\gamma_1(C)$ to $\gamma_1(C)=C'$. Since $\psi_t\circ \gamma_t$ is semi-area-preserving with respect to $C$ we thus get a semi-area-preserving isotopy of the plane taking $C$ to $C'$. 
\end{proof}

\section{Area preserving isotopies between nearby curves}\label{sec:delta}
In this section we show that if $C$ and $C'$ are two immersed circles in the plane such that there exists a disk-area-preserving isotopy taking $C$ to $C'$, if $C'$ lies sufficiently close to $C$ then there exists an area-preserving isotopy taking $C$ to $C'$. This implies that if we have two immersions $C$ and $C'$, not necessary close to each other, and a semi-area-preserving isotopy $\psi_\tau$ taking $C$ to $C'$, then we can find an area-preserving isotopy taking $C$ to $\psi_{\tau_0}(C)$ for $\tau_0$ sufficiently small. Thus, by compactness arguments, we can find an area-preserving isotopy taking $C$ completely to $C'$. 

We begin by finding a suitable parametrization of a neighborhood of $C$, and then we define what we mean by $C'$ being ``sufficiently close'' to $C$.

So given $C$,  let $\nu>0$ be so small that $\overline{B}_{\nu,x_1} \cap \overline{B}_{\nu,x_2} = \emptyset$ for any double points $x_1 \neq x_2$ of $C$. Let $\xi : S^ 1 \times (-\epsilon, \epsilon) \rightarrow \R^ 2$ be an area-preserving parametrization of a neighborhood $C^ \epsilon$ of $C$ as in Lemma \ref{moser}. Then at each double point $x$ of $C$ we get a double point of $\xi$, i.e.\ a subset $U_x \subset C^\epsilon$ where $C^\epsilon$ overlaps itself. Let $\epsilon$ be so small that $U_x$ is a disk contained in $B_{\nu,x}$ and so that $\overline{C\cap U_x} $ consists of two smooth arcs $ L_s, L_t$ intersecting at $x$. Suppose that $x= \xi(0,0)$ and that $L_s = \xi ([-s_1,s_1] \times \{0\})$. Since $L_s$ intersects $L_t$ transversely at $x$ there is a $t_1 > 0$ so that $L_t \cap ((-s_1,s_1) \times (-t_1,t_1))$ coincides with  the graph of a function $g: (-t_1, t_1) \rightarrow (-s_1,s_1)$ over the $t$-axis in the parametrization of $C^\epsilon$. Let $S = (-s_1,s_1) \times (-t_1,t_1)$ and let $\vartheta: S \rightarrow \R^2$ be defined by 
\begin{equation*}
\vartheta(s,t) = (\mu(s,t), \eta(s,t))= (s- g(t) ,t ). 
\end{equation*}
Then $\vartheta ^* \de \mu \wedge \de \eta = \de s \wedge \de t $, and $\vartheta$ maps $L_s \cap S$ to the $\mu$-axis and $L_t \cap S$ to the $\eta$-axis. Let 
\begin{equation*}
D_x = \xi \vartheta^ {-1}((-\tilde{s}_1, \tilde{s}_1) \times (-\tilde{t}_1, \tilde{t}_1))
\end{equation*}  
where $\tilde{s}_1, \tilde{t}_1 >0$ are so small that $\overline{\vartheta^ {-1}((-\tilde{s}_1, \tilde{s}_1) \times (-\tilde{t}_1, \tilde{t}_1))} \subset S$.
\begin{defi}
We call the data $\{C^\epsilon, D_x\}$  a \emph{regular neighborhood} of $C$.
\end{defi}

This means that a regular neighborhood of $C$ consists of an immersed annulus $C^\epsilon= \xi(S^1 \times(-\epsilon,\epsilon))$, and also a parametrization of a neighborhood of each double point of $C$ so that in this parametrization we have that $C$ coincides with the coordinate axes of $\R^2$. See Figure \ref{figure:regnbhd}.

\begin{figure}[h]
\labellist
\small\hair 2pt
\pinlabel $t$ [Br] at 8 253
\pinlabel $\epsilon$ [Br] at  8 233
\pinlabel $-\epsilon$ [Br] at 8 188
\pinlabel $S^1\times\R$ [Br] at 115 175
\pinlabel $\xi^{-1}(D_x)$ [Br] at 81 265
\pinlabel $1$ [Br] at 94 197
\pinlabel $s$ [Br] at 110 201
\pinlabel $\xi$ [Br] at 144 222
\pinlabel $C$ [Br] at 262 253
\pinlabel $C^\epsilon$ [Br] at 313 185
\pinlabel $D_x$ [Br] at 319 44
\pinlabel $\xi\circ\vartheta^{-1}$ [Br] at 267 69
\pinlabel $\mu$ [Br] at 220 25
\pinlabel $L_s$ [Br] at 215 7
\pinlabel $L_t$ [Br] at 206 66
\pinlabel $\eta$ [Br] at 167 80
\pinlabel $\vartheta$ [Br] at 125 70
\pinlabel $L_s$ [Br] at 90 67
\pinlabel $L_t$ [Br] at 87 114
\endlabellist
\centering
\includegraphics{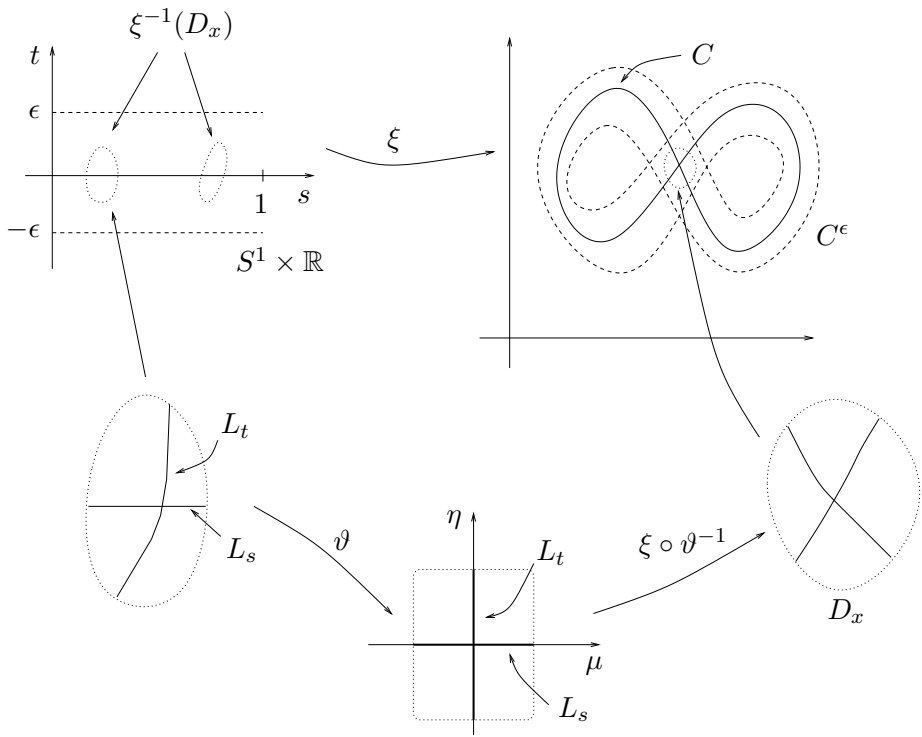}
\caption{An example of a regular neighborhood.}
\label{figure:regnbhd}
\end{figure}	  

Now let $C' \subset C^\epsilon$ be an immersion such that there exists a disk-area-preserving isotopy taking $C$ to $C'$. Let $Q_r$ be the open square with sides of length $2r$ centered at $0$, and $Q_{r,p}$ the open square with sides of length $2r$ centered at $p$. Let $\delta>0 $ be so small that for every double point $x \in C$ we have that $Q_{\delta,x}$ is contained in the parametrization of $D_x$. Further, for each double point $x \in C$, let $x'$ be the corresponding double point of $C'$, and let $L_s', L_t'\subset C'$ be the arcs corresponding to $L_s$ and $L_t$, respectively, in $C$. Assume that $C' \cap D_x \subset L_s' \cup L_t'$ and that $x' \in Q_{\delta,x}$ in the parametrization of $D_x$. Also assume that $L_s' \cap D_x $ and $L_t' \cap D_x $, respectively, are graphs of functions $g _\mu$ and $g _\eta$ over the $\mu$-and $\eta$-axis in the parametrization of $D_x$, satisfying $|g_\mu|,|g_\eta|, |\frac{\de g_\mu}{\de \mu}|, |\frac{\de g_\eta}{\de \eta}|< \delta$. If this holds for all double points of $C$, and if $C'$ is a graph of a function $g:S^1 \rightarrow (-\delta,\delta)$ in the parametrization of $C^\epsilon$ satisfying $|\frac{\de g}{\de s}| <\delta$, we say that $C'$ is \emph{$\delta$-close to $C$} in $\{C^\epsilon, D_x\}$.

The following result shows that if $C'$ is sufficiently close to $C$ in the above sense, then there is an area-preserving isotopy taking $C'$ to $C$.

\begin{lma}\label{delta}
Let $C$ be an immersion of $S^1$ in $\R^2$ and let $\{C^\epsilon, D_x\}$ be a regular neighborhood of $C$. Then there exists a $\delta>0$ such that for every immersion $C'$ which is $\delta$-close to $C$ in $\{C^\epsilon, D_x\}$ there is an area-preserving isotopy taking $C'$ to $C$. 
\end{lma}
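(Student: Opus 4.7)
The plan is to reduce to Lemma \ref{lma5} by first constructing an area-preserving isotopy $\Psi_t$ from $C'$ to a curve $C''$ that coincides with $C$ in a neighborhood of every double point. Since an area-preserving isotopy composed with the given disk-area-preserving isotopy from $C$ to $C'$ is again disk-area-preserving, $C$ and $C''$ will satisfy the hypotheses of Lemma \ref{lma5}, yielding an area-preserving isotopy from $C''$ to $C$ that can be concatenated with $\Psi_t$.

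For each double point $x$ I would work in the $D_x$ coordinates $(\mu,\eta)$, where $C$ is the coordinate cross, the pulled-back area form is $\de\mu\wedge \de\eta$, and the two arcs of $C'\cap D_x$ are graphs $\eta=g_\mu(\mu)$ and $\mu=g_\eta(\eta)$ with $C^1$-norms bounded by $\delta$. Choose once and for all a cutoff $\chi$ on $\R^2$ that equals $1$ on a fixed inner box $Q_{\delta_0,x}$ (large enough to contain the relevant portions of the arcs) and is supported inside $D_x$. Then I straighten with two sequential Hamiltonian flows:
\begin{equation*}
H_1(\mu,\eta) = -\chi(\mu,\eta)\int_0^\mu g_\mu(u)\,\de u,\qquad H_2(\mu,\eta) = -\chi(\mu,\eta)\int_0^\eta \tilde g_\eta(v)\,\de v,
\end{equation*}
where $\tilde g_\eta$ is the function whose graph over the $\eta$-axis is the image of $L_t'$ under $\Phi^1_{X_{H_1}}$. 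On $\{\chi\equiv 1\}$ the flow of $X_{H_1}$ is the vertical translation $(\mu,\eta)\mapsto(\mu,\eta-tg_\mu(\mu))$, which flattens $L_s'$ onto the $\mu$-axis and turns $L_t'$ into a new graph over the $\eta$-axis; the flow of $X_{H_2}$ is the analogous horizontal translation, which flattens that new graph onto the $\eta$-axis while sliding the $\mu$-axis along itself. A short computation confirms that the double point $x'$ is sent to the origin. Hence $\Phi^1_{X_{H_2}}\circ \Phi^1_{X_{H_1}}$ is an area-preserving diffeomorphism supported in $D_x$ that takes $C'\cap Q_{\delta_0,x}$ onto $C\cap Q_{\delta_0,x}$.

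Performing this at each double point and combining yields a compactly supported area-preserving isotopy $\Psi_t$ taking $C'$ to some $C''$ that coincides with $C$ near every double point and equals $C'$ outside $\bigcup_x D_x$. Because the cutoff $\chi$ is fixed independently of $C'$ and $g_\mu,g_\eta$ are $O(\delta)$ in $C^1$, both $X_{H_1}$ and $X_{H_2}$ are $O(\delta)$, so $\Psi_t$ is $C^1$-close to the identity. For $\delta$ small enough this ensures $C''\subset C^\epsilon$ and that $C''$ is still the graph of a smooth function with small $C^1$-norm in the $\xi$-parametrization. The hypotheses of Lemma \ref{lma5} are then satisfied by $C$ and $C''$, completing the argument.

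The main technical issue I expect is controlling $\Psi_t$ in the transition annulus $\{0<\chi<1\}$ so that the resulting $C''$ remains a graph over $C$ in the tubular parametrization and stays inside $C^\epsilon$. Since the cutoff is fixed and all displacements scale linearly with $\delta$, shrinking $\delta$ gives the required bounds; this is where the $\delta$-closeness hypothesis is used quantitatively.
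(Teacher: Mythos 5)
Your proposal is correct and follows essentially the same route as the paper: at each double point two successive compactly supported Hamiltonian flows flatten the two branches of $C'$ onto the coordinate axes of $D_x$, one checks that the image is still a graph over $S^1$ in the tubular coordinates so that Lemma \ref{lma5} applies, and the isotopies are concatenated. The only substantive difference is technical: you use a fixed cutoff, so the Hamiltonians and hence the flows are $O(\delta)$-small in $C^1$ by direct inspection, whereas the paper uses a cutoff with $\delta$-dependent transition width of order $\delta^{1/2}$ together with a primitive supported in $(-\delta^{1/2},\delta^{1/2})$ and derives $d\chi_1=E+O(\delta^{1/2})$ via a discretized Taylor expansion; your variant works (modulo the sign of $H_2$, which under the paper's convention for $X_H$ should be $+\chi\int_0^\eta \tilde g_\eta(v)\,\de v$ to translate toward the $\eta$-axis rather than away from it) and is arguably cleaner.
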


\begin{proof}
Let $1>\sigma >0$ be so small so that in each parametrized disk $D_x$ we can find
a square $Q_{\sigma,x}=Q_\sigma$, where $x$ corresponds to $(0,0)$ in the
parametrization. Let $\delta>0$ be sufficiently small so that
$\sigma>\delta^{1/2}$  and let $\psi\colon\mathbb R \to \mathbb
R$ be a smooth cut-off function satisfying
\begin{equation*}
\psi(y)=
\begin{cases}
1	& \text{for } y\in(-\delta,\delta) \\
0	& \text{for } y \notin (-\sigma,\sigma)
\end{cases}
\end{equation*}
with 
\begin{align*}
|\psi|\leq 1, \qquad |\frac{d\psi}{d y}|<
\frac{a}{\delta^{1/2}-\delta}, \qquad
|\frac{d ^2\psi}{d y^2}|< \frac{b}{(\delta^{1/2}-\delta)^2}
\end{align*}
 for some constants $a$, $b$, i.e. 
\begin{equation*}
\frac{d\psi}{d y} = O(\delta^{-1/2}), \qquad \frac{d ^2\psi}{d
y^2}=O(\delta^{-1})
\end{equation*} 
as $\delta \to 0$.

Now let $C'$ be an immersion which is $\delta$-close to $C$ in $\{C^\epsilon,
D_x\}$, and let $x \in C$ be a double point. We start with showing that if
$\delta$ is sufficiently small then there is a neighborhood $U$ of $x$ and an
area-preserving isotopy $\phi_\tau$, $0\le \tau \le 1$, with support in $D_x$ so
that $\phi_1(C')\cap U$ coincides with $C \cap U$ and so that $\phi_1(C')$ is a
graph over $S^1$ in $C^\epsilon$. By finding one such isotopy for each double
point of $C$ and then use Lemma \ref{lma5} we get an area-preserving isotopy
taking $C'$ completely to $C$.

So given a double point $x\in C$, first consider the arc $L_s'\subset C'$,
defined as above. Since $C'$ is $\delta$-close to $C$ in $\{C^\epsilon, D_x\}$ 
we have that $L_s'$ coincides with the graph of a function $g_\mu \colon
(-\sigma,\sigma) \to (-\delta, \delta)$ in $Q_\sigma$. Let
$\delta$ be so small so that we can find an exact function $\tilde{g} \colon
\mathbb
R \to (-\delta, \delta)$ with support in $(-\delta^{1/2}, \delta^{1/2})$
whose graph coincides with $L_s'$ in $Q_\delta$ and which satisfies $|\frac{d
\tilde{g}}{d \mu}|=O(\delta^{1/2})$. Let $G(\mu)= \int_{-\sigma}^\mu
{\tilde{g}(\mu')
d \mu'}$, and consider the Hamiltonian $H(\mu, \eta)= -G(\mu)\psi(\eta)$ with
corresponding vector field 
\begin{equation*}
 X_H=G(\mu)\frac{d \psi}{d
\eta}(\eta)\frac{\partial}{\partial \mu} - \tilde{g}(\mu)\psi(\eta)
\frac{\partial}{\partial \eta}.
\end{equation*}
Then the Hamiltonian isotopy $\Phi_{X_H}^\tau = (\chi _\tau^1, \chi
_\tau^2)= \chi _\tau $, $0\le \tau \le 1$,
takes $L_s'$ to the $\mu$-axis in $Q_\delta$, and has support in $Q_\sigma$.

Next we want to take $\chi_1(L_t')$ to the $\eta$-axis in a neighborhood of $0$
in a way so that the image of  $\chi_1(L_s')$ still coincides with the
$\mu$-axis here. But first, to make sure that $\chi_1(C')$ is still a graph over
$S^1$ in the parametrization of $C^\epsilon$ we find an estimate for the
derivative $d\chi_1$  of $\chi_1$. 
Divide $[0,1]$ into $N$ intervals of length $1/N$. By Taylor expansion we have,
for $\tau \leq 1/N$, that
\begin{align*}
\frac{\partial \chi_\tau^1}{\partial \mu} &= 
\frac{\partial \chi_0^1}{\partial \mu} + \tau \frac{d}{d \tau}
\frac{\partial \chi_0^1}{\partial \mu} +O(\tau^ 2)= 
 1 + \tau \frac{\partial}{\partial \mu}(G(\mu)\frac{d \psi}{d \eta}(\eta)) +
O(\tau^ 2) = \\
&= 1 + \tau \tilde{g}(\mu)\frac{d \psi}{d \eta}(\eta) + O(1/{N^2})	
\end{align*}
 and
\begin{align*}
\frac{\partial \chi_{1/N+\tau}^1}{\partial \mu} &= 
\frac{\partial \chi_{1/N}^1}{\partial \mu} + \tau \frac{d}{d \tau}
\frac{\partial \chi_{1/N}^1}{\partial \mu} +O(\tau^ 2)= \\
&= (1 + 1/N \tilde{g}(\mu)\frac{d \psi}{d \eta}(\eta) + O(1/{N^2}))+
\tau\tilde{g}(\mu)\frac{d \psi}{d \eta}(\eta) + O(1/{N^2}).
\end{align*}
If we continue like this we get
\begin{align*}
\frac{\partial \chi_{1}^1}{\partial \mu} &= 
1 + \sum_{n=0}^ {N-1} {1/N \tilde{g}(\mu(n/N))\frac{d \psi}{d
\eta}(\eta(n/N))} + NO(1/{N^2}) \\
&= 1+ O(\delta^{1/2})+ O(1/N)
\end{align*}
since $|\tilde{g}|<\delta$, $|\frac{d \psi}{d \eta}|=O(\delta^{-1/2})$. Hence
for $N$ big
enough, depending on $C'$, we get $\frac{\partial \chi_{1}^1}{\partial \mu}= 1 +
O(\delta^{1/2})$, where the $O(\delta^{1/2})$-term depends on $C, C^\epsilon$
and $D_x$.
Similarly we have
\begin{equation*}
\frac{\partial \chi_{1}^1}{\partial \eta}= 0 +  \sum_{n=0}^ {N-1} {1/N
G(\mu(n/N))\frac{d^2 \psi}{d \eta^2}(\eta(n/N))} + O(1/{N})= O(\delta^{1/2})
\end{equation*}
since $|G|<\delta^{1/2}\delta$, $|\frac{d^2 \psi}{d \eta^2}|=O(\delta^{-1})$,
and
\begin{align*}
&\frac{\partial \chi_{1}^2}{\partial \mu}= 0 -  \sum_{n=0}^{N-1}{1/N
\frac{d\tilde{g}}{d \mu}(\mu(n/N))\psi(\eta(n/N))} + O(1/{N}) =
O(\delta^{1/2})\\
&\frac{\partial \chi_{1}^2}{\partial \eta}= 1 -  \sum_{n=0}^ {N-1} {1/N
\tilde{g}(\mu(n/N))\frac{d \psi}{d \eta}(\eta(n/N))} + O(1/{N})
=1+O(\delta^{1/2}).
\end{align*}
Thus we get that 
\begin{equation}
d\chi_1= E + O(\delta^{1/2})
\label{eq:diff}
\end{equation}
where $E$ is the $2 \times 2$ unit matrix and $O(\delta^{1/2})$ denotes a $2
\times 2$
matrix with entries of size $O(\delta^{1/2})$.

Now let $\vartheta=(\vartheta^1, \vartheta^2)\colon D_x \to D_x$
be a
change of coordinates from $(\mu,\eta)$ to $(s,t)\subset C^\epsilon$. In
$(s,t)$-coordinates by assumption we have that $L_s' \cap D_x = \{(s, g(s))\}$
for $s \in (\sigma_1, \sigma_2)$, say, and $g$ satisfies $|g|, |\frac{d g}{d
s}|< \delta$. By \eqref{eq:diff} we have 

\begin{equation*}
\frac{d}{d s}\vartheta^1(\chi_1 \vartheta^{-1}(s,g(s))) = 1 + O(\delta^{1/2})
\end{equation*}
for all $s\in (\sigma_1, \sigma_2)$, so $\chi_1(L_s')$ is a graph of a function
$\alpha\colon S^1 \to \mathbb R$ in the parametrization of
$C^\epsilon$ if we let $\delta$ be small enough.
 Furthermore, for the slope of $\alpha$ we get that 
 \begin{align*}
\left|\frac{d \alpha}{d s}\right| =\left| \frac{\frac{d}{d
s}\vartheta ^2(\chi_1 \vartheta^{-1}(s,g(s)))}{\frac{d}{d s}\vartheta
^1(\chi_1 \vartheta^{-1}(s,g(s)))}\right|=
\frac{O(\delta^{1/2})}{1+O(\delta^{1/2})}=O(\delta^{1/2}). 
\end{align*}

Similar calculations show that $\chi_1(L_t')$ is a subset of both a graph over
$S^1$ in the parametrization of $C^\epsilon$ and a graph over the $\eta$-axis
in the parametrization of $D_x$ for $\delta$ sufficiently small. Moreover, the
slope of these graphs are of order $\delta^{1/2}$. 

Now we find an isotopy $\tilde{\chi}_\tau$, $0\le \tau \le 1$, taking
${\chi}_1(L_t')$ to $L_t$ in a neighborhood of $x$, and so that 
$\tilde{\chi}_1(\chi_1(L_s'))$ still coincides with $L_s$ here. Since by
assumption we had $x' \in Q_\delta \subset D_x$, where $x'\in C'$ is the double
point corresponding to $x$, we have $\chi_1(x') \in
(-\delta,\delta)\times\{0\}$. Hence we can find a $0<\delta'<\delta$ so that
$\chi_1(L_t')$ coincides with the graph of an exact function $f\colon \mathbb R
\to (-\delta,\delta)$ in $(-\delta,\delta) \times (-\delta',
\delta')$, that is,  $\chi_1(L_t')\cap ((-\delta,\delta) \times (-\delta',
\delta'))=\{(f(\eta),\eta)\}$. In addition we can choose $f$ so that 
$|\frac{d f}{d\eta}| =O(\delta^{1/2})$ for all $\eta \in \mathbb R$ and so that 
$f(\eta)=0$ for $|\eta|> \delta^{1/2}$. Let
$F(\eta)=\int_{-\sigma}^\eta{f(\eta') d
\eta'}$. Then the isotopy $\Phi_{X_H}^\tau =\tilde{\chi}_\tau$, $0\le \tau \le
1$, obtained from the Hamiltonian  $H(\mu,\eta)=\psi(\mu)F(\eta)$ takes
$\chi_1(L_t')$ to the $\eta$-axis in $(-\delta, \delta)
\times(-\delta',\delta')$, and we have that $\tilde{\chi}_1\chi_1(L_s')$ still
coincides with  the $\mu$-axis in a neighborhood of
$(0,0)=\tilde{\chi}_1\chi_1(x')$. 

As before we get that 
\begin{equation*}
d\tilde{\chi}_1 = E +
\begin{bmatrix}
\frac{d \psi}{d \mu} f & \psi \frac{d f}{d\eta} \\
\frac{d^2 \psi}{d \mu^2} F & \frac{d \psi}{d \mu} f 
\end{bmatrix}
+ O(1/N) = E+ O(\delta^{1/2})
\end{equation*} 
for $N$ large. 
So for $\tilde{\chi}_1\chi_1(L_s')$ in $C^\epsilon \cap D_x$ we have, with
$\chi_1(L_s')=\{(s, \alpha(s))\}$ here, that 
\begin{equation*}
\frac{d}{d s}\vartheta \tilde{\chi}_1 \vartheta ^{-1}(s, {\alpha}(s)) =
\begin{bmatrix}
1 & 0 \\
0 & \frac{d \alpha}{d s}  
\end{bmatrix}
+ O(\delta^{1/2}).
\end{equation*}
Hence $\tilde{\chi}_1 \chi_1(L_s')$ will be a subset of a graph over $S^1$ for
$\delta$ small enough, and similarly we get that $\tilde{\chi}_1 \chi_1(L_t')$
is a subset of a graph over $S^1$ in the parametrization of $C^\epsilon$ too. 

By doing the same thing at all double points of $C$ we get an area-preserving
isotopy taking $C'$ to $C$ in a neighborhood of every double point of $C$, and
so that the time $1$-image of $C'$ is still a graph over $S^1$ in $C^\epsilon$.
So
by Lemma \ref{lma5} there is an area-preserving isotopy taking $C'$ completely
to $C$. 
\end{proof}

\section{Proof of Theorem \ref{thm1}}\label{sec:proof}
Now if we combine Lemma \ref{sapi} with Lemma \ref{delta} we can prove our theorem:
\begin{proof}[Proof of Theorem \ref{thm1}]
By Lemma \ref{sapi} there is a semi-area-preserving isotopy $\phi_\tau$, $0\le \tau \le 1$, with respect to $C$ taking $C$ to $C'$. Let $C_\tau=\phi_\tau(C)$ for $\tau \in [0,1]$, and for each $\tau_0 \in [0,1]$ let $\{C_{\tau_0}^\epsilon, D^{\tau_0}_x\}$ be a regular neighborhood of $C_{\tau_0}$. By Lemma \ref{delta} we can find a $\delta_{\tau_0}>0$ so that for every $C_\tau$ which is $\delta_{\tau_0}$-close to $C_{\tau_0}$  there exists an area-preserving isotopy taking $C_\tau $ to $C_{\tau_0}$,  and by the continuity of $\phi_\tau$ there is a $\nu_{\tau_0}>0$ so that $C_\tau$ is $\delta_{\tau_0}$-close to $C_{\tau_0}$ for all $0\leq \tau-\tau_0 < \nu_{\tau_0}$. 

Let $\nu = \text{min}_{\tau_0 \in I} \{\nu_{\tau_0}\}$ and let 
\begin{equation*}
0=  \tau_1 < \dotsc < \tau_n=1
\end{equation*}
be a partition of $[0,1]$ so that $\tau_{i+1}-\tau_i < \nu$ for $1\leq i <n$. Then by Lemma \ref{delta} there is an area-preserving isotopy taking $C_{\tau_{i+1}}$ to $C_{\tau_i}$ for $i=1,\dotsc,n-1$. Composing the inverses of these isotopies we thus get an area-preserving isotopy taking $C$ to $C'$.
\end{proof}

\bibliographystyle{halpha}
\bibliography{kand} 

\end{document}